\newtheorem{theorem}{Theorem}[section]
\newtheorem{lemma}[theorem]{Lemma}
\theoremstyle{definition}
\theoremstyle{remark}
\numberwithin{equation}{section}
\begin{document}
\title {Optimal Control of Two-Phase Membrane  Problem  }
\author{Farid Bozorgnia,  Vyacheslav Kungurtsev }
 \address{CAMGSD, Department of Mathematics, Instituto Superior T\'{e}cnico, Lisbon}
\email{farid.bozorgnia@tecnico.ulisboa.pt }
\thanks{F. Bozorgnia was supported by the FCT  fellowship SFRH/BPD/33962/2009 and by Marie Skłodowska-Curie grant agreement No. 777826 (NoMADS)}

\subjclass[2000]{Primary 35R35; Secondary 34A34.}

\keywords{Free boundary, Two-phase membrane problem,   Optimal control.}
\begin{abstract}
We consider an optimal control problem where the state 
  is governed by a  free boundary problem called the two-phase membrane problem and the control appears in the coefficients of the characteristic function of the positivity and negativity parts of the solution.   Our investigation focuses on various properties associated with the control-to-state map.    Due to the non-differentiability of this map, we regularize the state equation.    The existence, uniqueness, and characterization of the optimal pairs are established.
\end{abstract}

\theoremstyle{plain}
\newtheorem{proposition}[theorem]{Proposition}
\newtheorem{corollary}[theorem]{Corollary}
\theoremstyle{definition}
\theoremstyle{definition}
\numberwithin{equation}{section}

\newcommand\R{\mathbf{R}}
\newcommand{\E}{\mathcal{E}}
\newcommand{\C}{\mathcal{C}}
\maketitle

\section{Introduction}

\subsection*{ Notation}
  We will use the following notation throughout the paper.

\begin{itemize}
 \item  $\Omega$ \,   an open, bounded  subset of $\mathbb{R}^{n}$ with smooth boundary.
\item   $\chi_{D}$ \,   the characteristic function of the set $D \subset \mathbb{R}^{n}.$

  \item $\chi^{\varepsilon}_{\mathbb{R}^+}  $ is a non-decreasing, smooth approximation of $\chi_{\mathbb{R}^+} .$
    \item   $f_{\pm} $   positive numbers or positive Lipschitz functions.
       \item  $ u^{\pm} =  \max(\pm u, 0).$
\item ${\{u>0\}}\equiv {\{x\in \Omega: u(x) >0\}}$.
\item $B_{r}(x_{0})$ ball with radius $r$ centered at $x_0$, and  $B_{r}$ centered  at origin. 
 \end{itemize}

\bigskip

Infinite-dimensional optimization problems appear in many practical applications such as minimal surface,
image processing, semiconductor design, and inverse problems. We consider an optimal control problem where the state satisfies a
two-phase membrane problem and the control function appears in the coefficients of the state equation.

To illustrate our motivation, assume we want to adjust the temperature  $u(x)$ in the domain $\Omega$ between two given functions $\theta_{-}, \theta_{+}$  where
 $ \theta_{-}(x) \le  \theta_{+}(x).$  To do this, there are cooling and heating devices that are distributed over domain $\Omega.$
 Due to limitations on the power of devices, the generated heat flux $-\phi \in [-f_{-}, f_{+} ].$ If $u$ is not in the given range then the reformatory heat flux  $-\phi $  will be injected, see the first chapter in \cite{A14} and \cite{A4}   for applications in modeling of population density with high competition.

In recent years there have been intense studies of optimal control with free boundary problems as constraints, see  \cite{A1, A2, B2, C1, A10,   A13}. In the case that the control to state map is the solution of a variational inequality, various authors have investigated this problem \cite{A2, A9, A10, A13}.  The first paper on the optimal obstacle control problem was that
of Adams et al. \cite{A2}. A notable result of that paper is that the optimal obstacle is equal to its corresponding state. The case that  state to control map is  a  semi-linear
elliptic variational inequality and the control function is the obstacle discussed in \cite{B2}.    The authors implement the penalty method to approximate the variational inequality by a sequence of equations, to obtain the optimality system that
characterizes the optimal solution. They obtain an approximate optimality system
and convergence results to pass to the limit in this system.

Also in \cite{A9, A10}  for the optimal control problems governed by elliptic variational inequalities,   optimality conditions are derived, and the existence result for    Lagrange multipliers is proved. Next, a primal-dual active set method is proposed to approximate the optimality systems numerically.

Due to the non-differentiability of the solution operator of the obstacle problem  the
  Bouligand generalized differential operator has been investigated.  In \cite{A15, A16}  the authors characterize and compute a specific
element of the Bouligand generalized differential for a wide class of obstacle problems.

        In \cite{A12}, to approximate the optimal
control problem constrained with elliptic variational inequality, a computational technique based on the pseudo-spectral method is presented. By using the pseudo-spectral method, the infinite-dimensional mathematical
programming with equilibrium constraint, which can be an equivalent form of the considered problem, is converted
to finite-dimensional mathematical programming with complementarity constraint. Later in \cite{A13} authors present a fast computational technique based on the wavelet collocation method for the numerical solution of the mentioned problem.

In this work, we investigate an optimal control problem where the state equation is governed by a free boundary equation called a two-phase membrane.  In  our  control problem, we have the following elements:
\begin{itemize}
\item a) A control  $ \phi  $ which can be chosen from  an admissible set.
\item b) The state of system $u$  to be controlled, which depends on control  $\phi$.
\item c) The state equation that establishes the dependence between the control and the state,
this state equation is a two-phase equation.
\item d) An objective functional to be minimized,    depending on control and state $(\phi, u)$.
\end{itemize}

  Let  $f_{\pm}:\Omega \rightarrow \mathbb{R}$ be non-negative,   bounded,  $g\in W^{1,p}(\Omega)$ for $p>n$  and 
  $g$   changes sign on the boundary.  For    given
$ \phi \in  U_{ad}$  consider  
\begin{equation}\label{eq:state}
\left \{
\begin{array}{ll}
\Delta u= (f_{+}- \phi) \chi_{\{u >0 \}}-(f_{-}+\phi ) \chi_{\{u <0\}} \ & \text{in } \ \Omega,\\
  u=g   &  \text{on }  \  \partial \Omega.
\end{array}
\right.
\end{equation}
 Here   the characteristic  function  of the positive part    is defined as
 \begin{equation*}
 \chi_{\{u >0 \}}=
\left \{
\begin{array}{ll}
1       &    u(x)>0,  \\
  0    &  \text{elsewhere}.
  \end{array}
\right.
\end{equation*}
The admissible set for the control is denoted by $U_{ad},$ defined to be as follows
\[
U_{ad}={\{ \phi \in    L^{2}(\Omega): -f_{-}  \leq \phi  \leq f_{+}}\}.
 \]
 In Subsection \ref{TPM},  we derive   (\ref{eq:state}) and we discuss the existence and uniqueness of the solution of  (\ref{eq:state}). 
 Let $u \equiv T(\phi)$  be the corresponding solution of (\ref{eq:state}) for given $\phi$, i.e., $T$ is the control to state map. Now, we regard  $ \phi \in U_{ad} $ as the control variable and  $ u=T(\phi) $ as the corresponding state variable.
For given $ z \in L^{2}(\Omega),$  we try to find $\phi$ such that the corresponding state $ u$  is close to $z$ in $  L^{2} $. To do this consider the following functional
\begin{equation}\label{Min}
{J(\phi)=\frac{1}{2} \int_{\Omega}  (T(\phi)-z)^{2}\, dx +   \frac{\lambda}{2} \int_{\Omega}  |\phi|^{2}  \, dx}.
\end{equation}
We consider the following minimization
\textbf{Problem (P)}.  Find $ \phi^{*}   \in L^2(\Omega),$  such that
\begin{equation}\label{functional}
J(\phi^{*})=\underset{\phi\in U_{ad}}{\inf\,} J(\phi).
\end{equation}
 Note that  we reduce     the  problem in  $\phi$  by considering  $u= u(\phi) = T(\phi),$
so the problem becomes a nonsmooth and nonconvex problem, which is then hard to tackle.  In particular, the explicit representation of first-order optimality conditions suitable for numerical realization remains an issue.

Although there is extensive literature on optimal control with variational inequality constraints, it's crucial to note that the two-phase membrane problem cannot be reformulated as a variational inequality.
This work aims to study the existence of a minimizer for Problem (P), and first-order optimality condition.  Let us give the outline of the paper. The next section is devoted to derivation of the state equation (\ref{eq:state}). Afterward, we show the existence of a minimizer and prove some properties of the control-to-state map. Section 4 deals with the approximation of the problem by regularizing the state equation and the convergence of the regularized solution.

\section{Background and   Problem Setting}

\subsection{One phase obstacle problem}\label{OPM}

To start, we briefly explain the one-phase obstacle problem and we state a related optimal control problem. Let   $f \in L^{\infty}(\Omega)$ be a non negative function,  $g \in W^{1,2}(\Omega)$ and  $ \varphi \in W^{2, \infty}_{0}(\Omega)$ be given.  Then  minimize the  following  energy functional given by
 \begin{equation}\label{functional1}
\int_{\Omega}\left(\frac{1}{2}|\nabla v|^2+ f \, v  \right)dx,
\end{equation}
over the convex set $K=\{ v \in W^{1,2}(\Omega): v-g \in W^{1,2}_{0}(\Omega): v\ge \varphi \}.$  The minimizer  satisfies the following Complementarity constraints
\begin{equation}\label{one_phase}
\left \{
\begin{array}{ll}
\min(- \Delta v +f , v- \varphi)=0   &  \text{in} \ \Omega, \\
  v=g     & \text{on} \ \partial \Omega.
  \end{array}
\right.
\end{equation}
Let  $v$ be the solution of the  above obstacle problem, then $u=v-\varphi$  is the minimizer of the following functional
 \begin{equation}\label{onephase3}
 \int_{\Omega}\left(\frac{1}{2}|\nabla u|^2+ f_{1} \, u\right)\,dx,
\end{equation}
 over the set
    $K_1=\{ u \in W^{1,2}(\Omega): u-g_{1} \in W^{1,2}_{0}(\Omega): u \ge 0 \},$ with
    \[
    f_1=f-\Delta  \varphi, \quad  g_{1}=g- \varphi.
    \]
The  minimizer of (\ref {onephase3})  satisfies the following one-phase obstacle problem 
 \begin{equation}\label{one_phase2}
\left \{
\begin{array}{ll}
 \Delta u=   (f- \Delta  \varphi)\,      \chi_{\{u >0 \}}  &  \text{in} \ \Omega, \\
  u=g_1     & \text{on} \ \partial \Omega.
  \end{array}
\right.
\end{equation}
We take $\phi= \Delta  \varphi$  as the control and set the map $u=T(\phi).$  
We point out in the literature on optimal control of obstacle problems,  they consider    (\ref{functional1}) or equivalently (\ref{one_phase}) as the sate equation,  which is slightly different from our reformulation given by (\ref{one_phase2}).  To make connection,   $\varphi$ can be recovered  from   $\phi^{*}$  by
\begin{equation}
\left \{
\begin{array}{ll}
  \Delta  \varphi  =    \phi^{*}  &  \text{in} \ \Omega, \\
   \varphi=0     & \text{on} \ \partial \Omega.
  \end{array}
\right.
\end{equation}

\subsection{Two phase  Membrane problem }\label{TPM}

 Assume that the boundary value   $g$  is smooth and changes sign on  $ \partial \Omega$. Let $K=\{ v \in W^{1,2}(\Omega): v=g \, \,  \text{on} \, \, \Omega \}$. Consider the functional
\begin{equation}\label{functional3}
\int_{\Omega}\left(\frac{1}{2}|\nabla v|^2+f_{+} \text {max}(v,0)+ f_{-}\text {max}(-v,0)\right)\, dx,
\end{equation}
 which is convex, weakly lower semi-continuous, and hence attains its infimum at some point  $u \in K$.
The Euler-Lagrange equation corresponding to the minimizer  $u$ is given by   (\ref{twophase1}), which is called \emph{two phase membrane problem}.
\begin{equation}\label{twophase1}
\left \{
\begin{array}{ll}
\Delta u= f_{+} \chi_{\{u >0 \}}-f_{-} \chi_{\{u <0\}}  &  \text{in} \ \Omega, \\
  u=g\     & \text{on} \ \partial \Omega,
  \end{array}
\right.
\end{equation}
 Let $u$ be the solution of  (\ref{twophase1}).  The  \emph{free boundary} is defined  as (see (\cite{A14, A20}))
\[
 \Gamma(u) =\partial \{ x \in \Omega: u(x)>0 \}  \cup \partial\{x\in \Omega: u(x) < 0\} \cap \Omega.
  \]

In Subsection 2.4 we derive a general form of the two-phase membrane problem.  The free boundary  consists of two parts: $$  \Gamma'(u)=\Gamma(u)\cap{\{x \in \Omega :\nabla u(x)= 0 }\}  $$ and  $$ \Gamma''(u)=\Gamma(u)\cap{\{x \in \Omega :  \nabla u(x) \neq 0 }\}. $$

       To see numerical approximations for the two-phase problem we refer to in \cite{AA3, A3, ABP,    A5, A19}. For a substantial reference of free boundary problems of obstacle type, we refer to \cite{A14}.

\subsection{The State    Equation}\label{TOPM}
In this part, we derive a generalized version of (\ref{twophase1})   i.e.,   state equation (\ref{eq:state}).  Consider the functional
\begin{equation}\label{functional2}
I(v)=\int_{\Omega}(\frac{1}{2}|\nabla v|^2+f_{+}(v-\psi)^{+}+ f_{-}(v-\psi)^{-})\, dx,
\end{equation}
with  $v=g$  on   $\partial\Omega$,  $g$ is smooth and  changes sign on the boundary,     and $\psi \in  W^{2,p}$  for all
$p>n$  and  $\psi=0$  on the boundary.  If  $\psi=0$ then functional  (\ref{functional2}) reads  as (\ref{functional3}).  The objective of this section is to show that minimizing   $I(v)$ over $K$  has a solution  $w$  and that  $u=w- \psi$ satisfies equation (\ref{eq:state}).

As  in  Subsection 2.1,   $I$ is coercive,  convex, and weakly lower semi-continuous in $H^{1}(\Omega)$ and so therefore attains its infimum at some point $w.$ The first variation  on the energy $I$ at $ w$,
i.e., considering  $v= w +\varepsilon  \eta $, with $\varepsilon >0$ and
$\eta \in C_0^\infty(\Omega)$ gives
\begin{equation*}
\begin{split}
0 & \leq \frac{I(w +\varepsilon  \eta)- I(w)}{\varepsilon}\\
&=\int_{\Omega} \frac{|\nabla (w +\varepsilon \eta)|^{2} -  |\nabla w |^{2}}{\varepsilon} \,dx +\int_{\Omega}f_{+}\frac{(w+\varepsilon\eta-\psi)^{+}-(w-\psi )^{+}}{\varepsilon} \,dx\\
& +\int_{\Omega}f_{-}\frac{(w+\varepsilon\eta-\psi)^{-}-(w-\psi )^{-}}{\varepsilon} \,dx\\
&:=A+ B+ C.
\end{split}
\end{equation*}
Calculation for $A$ is straightforward. For $B$ reads as,
\begin{equation*}
\begin{split}
B & :=\frac{1} {\varepsilon} \int_{\Omega} f_{+}\,   \left[(w+\varepsilon\eta-\psi)^{+}-(w-\psi )^{+} \right]\,dx\\
& =\frac{1} {\varepsilon} \int_{\{w+\varepsilon\eta > \psi\} \cap \Omega} f_{+}\, (w+\varepsilon\eta-\psi)dx-\frac{1} {\varepsilon} \int_{\{w> \psi\} \cap \Omega} f_{+}\,  (w-\psi)\,dx\\
&= \int_{\{w+\varepsilon\eta > \psi\} \cap \Omega} f_{+}\,  \eta\,dx+\frac{1} {\varepsilon} \int_{\{0\geq w-\psi>-\varepsilon\eta \} \cap \Omega  \cap\{\eta ge 0\}} f_{+}\, (w-\psi)\,dx \\
 & -  \frac{1} {\varepsilon} \int_{\{0\le  w-\psi < -\varepsilon\eta \} \cap \Omega  \cap\{\eta < 0\}} f_{+}\, (w-\psi)\,dx\\\\
&\leq \int_{\{w+\varepsilon\eta > \psi\} \cap \Omega} f_{+} \eta\,dx .
\end{split}
\end{equation*}
The same argument for $C$ as above shows
\begin{equation*}
\int_{\Omega}f_{-}\frac{(w+\varepsilon\eta-\psi)^{-}-(w-\psi )^{-}}{\varepsilon}\, dx\leq - \int_{\{w+\varepsilon\eta < \psi\} \cap \Omega}f_{-}\,  \eta \, dx.
\end{equation*}
Considering the initial inequality we have
\begin{equation*}
\begin{split}
0\leq &\frac{I(w +\varepsilon \eta)- I(w)}{\varepsilon}\\
\leq& \int_{\Omega} \frac{|\nabla (w +\varepsilon \eta)|^{2} -  |\nabla w |^{2}}{2\varepsilon} \, dx+ \int_{\{w+\varepsilon\eta > \psi\} \cap \Omega} f_{+} \, \eta \,dx\\
& -\int_{\{w+\varepsilon\eta < \psi\} \cap \Omega}f_{-}\,  \eta \,dx.
\end{split}
\end{equation*}
Letting $\varepsilon \rightarrow 0 $, and noting,
\begin{equation*}
\int_{\Omega} \frac{|\nabla (w +\varepsilon \eta)|^{2} -  |\nabla w |^{2}}{2\varepsilon} \, dx  =\int_{\Omega}  \nabla w \cdot   \nabla \eta +  \frac{ \varepsilon |\nabla \eta |^{2}}{2} \, dx,
\end{equation*}
 we obtain
 \begin{equation*}
\begin{split}
 & \int_{\Omega}\nabla w \cdot \nabla \eta \, dx +\int_{\{w > \psi\} \cap \Omega}  f_{+}\,  \eta \,  dx-\int_{\{w < \psi\} \cap \Omega} f_{-}\, \eta \,dx +\\
  & \int_{\{w = \psi\} \cap \{\eta>0\}} f_{+}\eta \, dx-\int_{\{w = \psi\} \cap \{\eta<0\}} f_{-} \eta \,dx\geq 0,
 \end{split}
\end{equation*}
for every test function $\eta\in C_0^\infty(\Omega)$. Next the same calculation for $\varepsilon <  0 $ yields the reverse inequality. This implies  $\Delta w$    is a bounded distribution and thus is a measure. By
Radon-Nikodym,   we conclude there is  function in   $L^{1}(\Omega)$ that represents  $\Delta w \in   L^{1}(\Omega)$ as a distribution.
Thus, the following  holds in the weak sense
\begin{equation*}
\Delta w=f_+\chi_{\{w>\psi\}} - f_-\chi_{\{w<\psi\}} \text{ in } \{w\neq \psi\}
\end{equation*}
and
\begin{equation*}
f_+\chi_{\{w>\psi\}} - f_-\chi_{\{w\leq\psi\}} \leq \Delta w \leq f_+\chi_{\{w\geq\psi\}} - f_-\chi_{\{w<\psi\}} \text{ in } \Omega,
\end{equation*}
which implies   $w\in W^{2,p}(\Omega)$ for all $1\leq p<\infty$. Thus $w$ satisfies the following
\begin{equation}\label{eq:state1}
\left \{
\begin{array}{ll}
\triangle w= f_{+} \chi_{\{w >\psi\}}-f_{-} \chi_{\{w <\psi\}} +\Delta \psi \chi_{\{w=\psi\}} \ & \text{in}\ \Omega, \\
  w =g   &  \text{on}  \  \partial \Omega.\\
\end{array}
\right.
\end{equation}
Now let $ u=w-\psi$   then we have
\begin{equation}\label{eq:state2}
\left \{
\begin{array}{ll}
\Delta u  =(f_{+}- \Delta \psi) \chi_{\{u >0 \}}-(f_{-}+ \Delta \psi) \chi_{\{u <0\}}    &  \text {in} \, \Omega, \\
  u=g   &  \text{on} \,  \partial \Omega.\\
\end{array}
\right.
\end{equation}
Next choosing   $\phi= \Delta \psi$ yields (\ref{eq:state}).
\[
\]

\section{Existence and properties   of Minimizer}\label{farid}
\[
\]
In this section, we study some properties of control to state map. The following Lemma shows a type of monotonicity of solution with respect to coefficient $ \phi$.
\begin{lemma}\label{prop:nus1}
	Assume that $\phi_1 >  \phi_2.$  Let $u_i$ for $i=1,2$ be the solution of the following Dirichlet problem
	\begin{equation}\label{twophase}
	\left \{
	\begin{array}{ll}
	\Delta u_{i}= (f_{+}-\phi_{i})   \chi_{\{u_i >0 \}}- ( f_{-}+\phi_{i})  \chi_{\{u_{i} <0\}}  &  \text{in} \ \Omega, \\
	u_i=g\     & \text{on} \ \partial \Omega,
	\end{array}
	\right.
	\end{equation}
	Then
	\[
	u_1 \ge u_2  \quad in  \, \,  \Omega.
	\]
\end{lemma}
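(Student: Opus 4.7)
The plan is to set $w := u_1 - u_2$ and apply the weak maximum principle on the (open) set $A := \{x \in \Omega : w(x) < 0\} = \{u_1 < u_2\}$. The strategy is threefold: (i) show $\Delta w \le 0$ a.e.\ in $A$ by a case analysis on the signs of $u_1,u_2$; (ii) observe that $w$ vanishes on $\partial A$, either by continuity on $\partial A \cap \Omega$ or because $w = g - g = 0$ on $\partial \Omega$; and (iii) invoke the maximum principle for superharmonic strong solutions to conclude that $w \ge 0$ on $A$, which contradicts the definition of $A$ unless $A = \emptyset$.

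For step (i), I would first record that, since $f_\pm$ are bounded and $\phi_i \in L^\infty(\Omega)$ by admissibility, elliptic regularity yields $u_i \in W^{2,p}(\Omega) \cap C^{1,\alpha}(\overline{\Omega})$ for every $p < \infty$. A standard Stampacchia-type fact then gives $\Delta u_i = 0$ almost everywhere on the coincidence set $\{u_i = 0\}$. This reduces the pointwise analysis on $A$ to three principal cases: $\{0 < u_1 < u_2\}$, $\{u_1 < u_2 < 0\}$, and $\{u_1 < 0 < u_2\}$. In the first two, the identical-sign cancellation of $f_\pm$ gives $\Delta w = \phi_2 - \phi_1 < 0$, and in the third, $\Delta w = -(f_+ + f_-) + \phi_2 - \phi_1 < 0$. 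On the measure-zero boundary slices (e.g.\ $\{u_1 = 0 < u_2\}$ or $\{u_1 < 0 = u_2\}$), one Laplacian vanishes a.e., while the admissibility bounds $-f_- \le \phi_i \le f_+$ force the remaining term to have the correct sign $\le 0$.

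Steps (ii) and (iii) are then routine. Since $w \in C^{1,\alpha}(\overline{\Omega})$, the set $A$ is open with $w \equiv 0$ on $\partial A$, and the weak maximum principle for $W^{2,p}$ strong solutions applied to the superharmonic function $w$ on $A$ yields $w \ge \min_{\partial A} w = 0$ throughout $A$, a contradiction. Hence $A = \emptyset$ and $u_1 \ge u_2$.

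The main technical obstacle is step (i): correctly handling the free boundary $\{u_i = 0\}$. Without the a.e.\ vanishing of $\Delta u_i$ there, the sign of $\Delta w$ on the coincidence sets would be ambiguous and the comparison would break down. Everything else is a fairly standard strong maximum principle argument, leveraging the fact that the particular structure of the two-phase equation makes the difference of nonlinearities monotone in $\phi$ in exactly the direction we need.
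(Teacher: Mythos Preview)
Your proof is correct and follows essentially the same route as the paper's: both define the open set where $u_1 < u_2$, establish $\Delta(u_1 - u_2) \le 0$ there by a sign-based case analysis, and then apply the maximum principle with matching boundary data to conclude that set is empty. Your treatment is in fact more careful than the paper's on the coincidence sets $\{u_i = 0\}$, where you invoke Stampacchia and the admissibility bounds $-f_- \le \phi_i \le f_+$ to close the remaining cases; the paper's version glosses over these points.
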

\begin{proof}  Let  $ \phi_{1} > \phi_{2}  > 0,$  we show  that  $u_{1}\geq u_{2}$. Set $D={\{x \in \Omega:u_{2} (x) > u_{1}(x)}\}.$
	If  $u_{2}(x)  \leq 0$  in $D$,    then $u_{1}(x)< 0$ and
	\[
	\Delta u_{1}  =- ( f_{-}+\phi_{1})  \leq  - ( f_{-}+\phi_{2})  =\Delta u_{2}.
	\]
	On the other hand, if  $u_{2} >0,$ then
	\[
	\Delta u_{2}=(f_{+}-\phi_{2})    \geq (f_{+}-\phi_{1}) \ge  \Delta u_1.
	\]
	Therefore in $D$ we have  $\Delta u_{2} \geq \Delta u_1$ and $u_2=u_1$ on the boundary of $D$   so by maximum principle $D=\emptyset$. Note that we have used the fact that $u_i$ is continuous.
\end{proof}
   Let
	\[
	M=\underset{x \in \Omega} {\text {ess sup}} (\phi_{1}(x) -\phi_{2}(x)).
	\]
	Now we claim that also $u_1+ M v\leq u_{2}$ in $B_1,$ where $v$ is the solution to
	\begin{equation*}
	\left \{
	\begin{array}{ll}
	\Delta v=1  &  \text{in} \, \Omega, \\
	v=0     & \text{on} \, \partial \Omega.
	\end{array}
	\right.
	\end{equation*}
	Set $\widetilde{D}={\{x \in \Omega:u_{2} (x) <  u_{1}(x)+ M v(x) }\}.$ Note that  $v(x) \le 0$ thus  in $\widetilde{D}$ the following holds 
	\[
	u_{2} (x) <  u_{1}(x)+ M v(x) \le  u_{1}(x).
	\]
	The inequalities above give
	\[
	\chi_{\{u_1 >0 \}}  \ge \chi_{\{u_2 >0 \}}, \quad   \quad   \chi_{\{u_1 < 0 \}}  \le \chi_{\{u_2 <0 \}}.
	\]
	Therefore	
	\begin{equation}\label{xx}
	\begin{split}
	\Delta (u_1 +M v)  = \Delta u_1 +M  & =  (f_{+}-\phi_{1}) \chi_{\{u_1 >0 \}} -  (f_{-}+\phi_{1}) \chi_{\{u_1 < 0 \}}     +M \\
	& \ge   (f_{+}-\phi_{1}) \chi_{\{u_2 >0 \}} - (f_{-}+\phi_{1}) \chi_{\{u_2 < 0 \}}+ M\\
	& \ge (f_{+}-\phi_{1} +M) \chi_{\{u_2 >0 \}} - (f_{-}+\phi_{1}- M) \chi_{\{u_2 < 0 \}}\\
	&    \ge (f_{+}-\phi_{2} ) \chi_{\{u_2 >0 \}} - (f_{-}+\phi_{2}) \chi_{\{u_2 < 0 \}}=\Delta u_2.\\
	\end{split}
	\end{equation}
	This shows on $\widetilde{D}$ we have $u_1+ M v \le  u_{2}$ which is contradiction thus  $\widetilde{D}=\emptyset.$	

The following theorem shows that the control-to-state operator $T$ is Lipschitz continuous.
\begin{theorem}\label{nus11}
Consider the map $u= T(\phi),$ then  $T: U_{ad} \rightarrow H^{1}(\Omega)$  is Lipschitz continuous.
\end{theorem}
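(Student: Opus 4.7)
The plan is to show directly that $w := u_1 - u_2$ satisfies an $H^1$ estimate by testing the difference of the two state equations against $w$ itself. Since $u_1 = u_2 = g$ on $\partial \Omega$, we have $w \in H^1_0(\Omega)$, so we may use $w$ as a test function and apply Poincar\'e's inequality at the end to upgrade the gradient bound to a full $H^1$ bound.

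It is convenient to introduce the nonlinearity
\[
F_\phi(t) = (f_+ - \phi)\chi_{\{t>0\}} - (f_- + \phi)\chi_{\{t<0\}},
\]
so that the state equation reads $\Delta u = F_\phi(u)$. The crucial observation is that, for each admissible $\phi \in U_{ad}$, the function $t \mapsto F_\phi(t)$ is \emph{monotone non-decreasing}: on $\{t>0\}$ it equals $f_+ - \phi \ge 0$, on $\{t<0\}$ it equals $-(f_- + \phi) \le 0$, and it vanishes at $t=0$. In particular $(F_{\phi_1}(u_1) - F_{\phi_1}(u_2))(u_1 - u_2) \ge 0$ a.e.

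I would then subtract the two state equations and split the right-hand side as
\[
\Delta w = \bigl(F_{\phi_1}(u_1) - F_{\phi_1}(u_2)\bigr) + \bigl(F_{\phi_1}(u_2) - F_{\phi_2}(u_2)\bigr),
\]
test against $w$, and integrate by parts to get
\[
\int_\Omega |\nabla w|^2\,dx + \int_\Omega w\bigl(F_{\phi_1}(u_1) - F_{\phi_1}(u_2)\bigr)\,dx = \int_\Omega w\bigl(F_{\phi_2}(u_2) - F_{\phi_1}(u_2)\bigr)\,dx.
\]
The second integral on the left is non-negative by the monotonicity just noted, so it can be dropped. On the right, a direct computation gives $F_{\phi_2}(u_2) - F_{\phi_1}(u_2) = (\phi_1 - \phi_2)\chi_{\{u_2 \ne 0\}}$, whose absolute value is bounded by $|\phi_1 - \phi_2|$. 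Applying Cauchy--Schwarz then the Poincar\'e inequality $\|w\|_{L^2} \le C_P\|\nabla w\|_{L^2}$ (valid since $w \in H^1_0(\Omega)$) yields
\[
\|\nabla w\|_{L^2}^2 \le \|\phi_1 - \phi_2\|_{L^2}\,\|w\|_{L^2} \le C_P\,\|\phi_1 - \phi_2\|_{L^2}\,\|\nabla w\|_{L^2},
\]
hence $\|\nabla w\|_{L^2} \le C_P\|\phi_1 - \phi_2\|_{L^2}$, and combined once more with Poincar\'e, $\|w\|_{H^1} \le C\|\phi_1 - \phi_2\|_{L^2}$.

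There is no real obstacle here beyond correctly identifying the decomposition; the essential point is the pointwise monotonicity of $F_\phi$ in its second argument, which absorbs the only term where $\chi_{\{u_1>0\}}, \chi_{\{u_2>0\}}$ are evaluated on \emph{different} arguments. Once that term is isolated and discarded as non-negative, the remaining term depends only on $u_2$ (not on $u_1$) and the dependence on $\phi$ is linear, so Cauchy--Schwarz and Poincar\'e close the estimate cleanly.
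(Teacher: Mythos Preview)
Your proof is correct and follows essentially the same strategy as the paper's: subtract the two state equations, test against $w=u_1-u_2\in H^1_0(\Omega)$, exploit the monotonicity of the characteristic-function nonlinearity to discard the sign-definite piece, bound the remainder by $\|\phi_1-\phi_2\|_{L^2}\|w\|_{L^2}$, and close with Poincar\'e.

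The organisational difference is worth noting. The paper groups the right-hand side by coefficient ($f_+$, $f_-$, $\phi_2$, $\phi_1$), drops the $f_\pm$ terms via the pointwise inequalities $(\chi_{\{u_2>0\}}-\chi_{\{u_1>0\}})(u_2-u_1)\ge0$ and $(\chi_{\{u_2<0\}}-\chi_{\{u_1<0\}})(u_2-u_1)\le0$, and is then left with $\phi_2\chi_{\{u_2\ne0\}}-\phi_1\chi_{\{u_1\ne0\}}$, where the characteristic functions are evaluated at \emph{different} states; the paper appeals to the preceding comparison lemma to pass from this to $|\phi_2-\phi_1|\,|u_2-u_1|$. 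Your ``freeze $\phi$, then freeze $u$'' decomposition $F_{\phi_1}(u_1)-F_{\phi_1}(u_2)+F_{\phi_1}(u_2)-F_{\phi_2}(u_2)$ avoids this: the monotone piece is handled exactly as in the paper, while the $\phi$-dependent piece $F_{\phi_1}(u_2)-F_{\phi_2}(u_2)=(\phi_2-\phi_1)\chi_{\{u_2\ne0\}}$ involves only $u_2$, so the bound by $|\phi_1-\phi_2|$ is immediate and no auxiliary comparison result is needed. Both arguments rely on the same underlying monotonicity; yours just packages it more cleanly.
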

\begin{proof}
 Assume $u_1= T(\phi_1), \, u_2= T(\phi_2).$ This means
 \begin{equation}\label{Jorge1}
 \Delta u_{1}= (f_{+}- \phi_{1} )\chi_{\{u_{1} > 0\}}   - (f_{-}+ \phi_{1} )\, \chi_{\{u_{1} < 0\}},
\end{equation}
 \begin{equation}\label{Jorge1}
 \Delta u_{2}= (f_{+}- \phi_{2} )\chi_{\{u_{2} > 0\}}   - (f_{-}+ \phi_{2} )\, \chi_{\{u_{2} < 0\}}.
\end{equation}
By subtracting these two equations, multiplying by $(u_2 - u_1)$, integrating, and subsequently applying integration by parts to the left-hand side 
\begin{equation}\label{xx}
\begin{split}
\int_{\Omega}| \nabla(u_{2} -  u_{1})|^2 \, dx  & = -\int_{\Omega} f_{+}\, ( \chi_{\{u_{2} > 0\}} - \chi_{\{u_{1} > 0\}})(u_2-u_1)dx\\
                                       &+ \int_{\Omega}   f_{-} \,  ( \chi_{\{u_{2} <  0\}} - \chi_{\{u_{1} < 0\}})  (u_2-u_1)\, dx\\
                                       & + \int_{\Omega}  \phi_2  \,( \chi_{\{u_{2} >   0\}} + \chi_{\{u_{2} < 0\}})(u_2-u_1)\, dx\\
                                        & - \int_{\Omega}  \phi_1 \, ( \chi_{\{u_{1} >   0\}} +   \chi_{\{u_{1} < 0\}}) (u_2-u_1)\, dx.
   \end{split}
   \end{equation}
   Note that
  \[
  ( \chi_{\{u_{2} > 0\}} - \chi_{\{u_{1} > 0\}})(u_2-u_1) \ge 0
  \]
    \[
     ( \chi_{\{u_{2} <  0\}} - \chi_{\{u_{1} < 0\}})(u_2-u_1) \le 0,
     \]
       to conclude that
\begin{equation}\label{xx}
\begin{split}
\int_{\Omega}| \nabla(u_{2} -  u_{1})|^2 \, dx   \le
 &  \int_{\Omega}  \phi_2  \,( \chi_{\{u_{2} >   0\}} + \chi_{\{u_{2} < 0\}})(u_2-u_1)\, dx\\
 & - \int_{\Omega}  \phi_1 \, ( \chi_{\{u_{1} >   0\}} +   \chi_{\{u_{1} < 0\}}) (u_2-u_1)\, dx.
    \end{split}
\end{equation}
Also, the following hold
\[
\chi_{\{u_{2} >   0\}} + \chi_{\{u_{2} < 0\}}\le 1,  \quad \chi_{\{u_{1} >   0\}} +   \chi_{\{u_{1} < 0\}}\le 1.
\]
This fact and the previous lemma  give
\begin{equation}\label{nus10}
\begin{split}
\int_{\Omega}| \nabla(u_{2} -  u_{1})|^2 \, dx & \le  \int_{\Omega} | \phi_{2}-   \phi_{1}| \, |u_{2}-u_{1}|\, dx 
  \end{split}
   \end{equation}
  Then Poincar\'{e} inequality, Young's inequality,    shows
   \[
   \| \nabla( u_{2} -  u_{1})\|_{L^{2}(\Omega)} \, dx  \le  C\|
   \phi_{2} -  \phi_{1}\|_{L^{2}(\Omega)} .
   \]
\end{proof}

Now we show that the optimal control problem (P) has a solution. Let $\{\phi_k\} $ be a minimizing sequence for Problem (P), i.e.,
\[
 J(\phi_k)\rightarrow\inf J.
  \]
  So it is bounded in $L^2(\Omega)$ and there is a weak  subsequence converging to some $\phi\in L^{2}(\Omega)$. Recall that $U_{ad}$ is convex, and therefore $\phi\in U_{ad}$.

Recall $g$ from the boundary condition in~\eqref{twophase1}.
If $u_k=T(\phi_k)$ satisfies \eqref{eq:state}, then $\Delta u_k$ is uniformly bounded. Indeed,
multiply  the equation
 \begin{equation}\label{Jorge51}
 \Delta u_{k}= (f_{+}- \phi_{k} )\chi_{\{u_{k} > 0\}}   - (f_{-}+ \phi_{k} )\, \chi_{\{u_{k} < 0\}},
\end{equation}
 by $(u_k -g)$  and integrate over $\Omega$  to obtain
   \begin{equation*}
 \int_{\Omega}   \nabla u_{k} \cdot  \nabla (u_{k}-g) \, dx = -\int_{\Omega}(f_{+}- \phi_{k} ) (u_{k}-g) \chi_{\{u_{k} > 0\}} \, dx    + \int_{\Omega} (f_{-}+ \phi_k) (u_{k}-g)  \chi_{\{u_{k} < 0\}} \, dx.
\end{equation*}
In the above, the harmonic extension of $g$ is still denoted by $g$.  Next, we have
\begin{equation}\label{xx}
\begin{split}
 \int_{\Omega}   | \nabla (u_{k}-g) |^{2}  \, dx  = & -\int_{\Omega}   \nabla g  \cdot  \nabla ( u_{k}-g) \, dx-\int_{\Omega}(f_{+}- \phi_{k} ) (u_{k}-g) \chi_{\{u_{k} > 0\}} \, dx  \\
   &  + \int_{\Omega} (f_{-}+ \phi_k) (u_{k}-g)  \chi_{\{u_{k} < 0\}} \, dx.
 \end{split}
\end{equation}
  Which implies
   \begin{equation*}
      \| \nabla (u_{k}-g)\|^{2}_{L^2}  \le  \frac{1}{2}\left(\|\nabla g\|^{2}_{L^2}+  \|\nabla ( u_{k}-g)\|^{2}_{L^2}\right)
      +\epsilon \|u_{k}-g\|^{2}_{L^2} + \frac 1{4\epsilon}\left(\|  f_{+}- \phi_{k} \|^{2}_{L^2} + \|f_{-}+ \phi_{k} \|^{2}_{L^2}\right).
\end{equation*}
Using Poincar\'{e}   inequality,
\[
  \| u_{k}-g\|^{2}_{L^2}  \le  \lambda^{-1}_{1}(\Omega)  \| \nabla (u_{k}-g)\|^{2}_{L^2},
  \]
where  $\lambda_{1}(\Omega)$ is the first eigenvalue of the Laplace operator, then rearranging terms we get for  $\epsilon <  \frac{\lambda_{1}(\Omega)}{2}$
\[
   \|  u_{k} \|_{W^{1,2}(\Omega)} \le C,  \quad \forall k.
\]
Hence,  there is $u\in W^{1,2}(\Omega)$  such that over a subsequence
\[
u_{k} \rightharpoonup  u \quad \text{ in }    W^{1,2}(\Omega),
\]
\[
u_{k}  \rightarrow   u \quad \text{ in }   L^{2}(\Omega),
\]
with $u=g$ on $\partial \Omega.$ If we consider a test function $\eta\in C_0^\infty(\Omega)$ and multiply \eqref{Jorge51}  by $\eta$, then we get
$$
-\int_{\Omega}\nabla u_k\cdot \nabla\eta\,dx=\int_{\Omega}(f_+-\phi_k)\eta\chi_{\{u_k>0\}}-(f_-+\phi_k)\eta\chi_{\{u_k<0\}}\,dx.
$$
Passing to the limit and recalling
the weak convergence $\phi_k\rightharpoonup \phi$ as well as the strong convergence $u_k\rightarrow u$ in $L^2(\Omega)$, this implies that
$$
-\int_{\Omega}\nabla u\cdot \nabla\eta\,dx=\int_{\Omega}(f_+-\phi)\eta\chi_{\{u>0\}}-(f_-+\phi)\eta\chi_{\{u<0\}}\,dx,
$$
and so $u=T(\phi)$.  Also by weak lower semi-continuity of a norm, we have
$$\|\phi\|_{L^2}\leq \liminf \|\phi_k\|_{L^2},$$
so,
$$
\inf J=\liminf \,J(\phi_k)=\liminf\left(\|u_k-z\|_{L^2}+\lambda \|\phi_k\|_{L^2}\right)\geq
\|u-z\|_{L^2}+\lambda \|\phi\|_{L^2}=J(\phi).
$$
Therefore,
\[
J(\phi)=\inf J.
\]
%

\section{Approximation of  the State Equation}
In general, we do not expect the solution operator of the obstacle problem to be differentiable, indeed see \cite{A15}, which proves this fact for the one-phase obstacle problem. In this section, we describe our strategy for regularizing the state equation. Set
 $  u^{\varepsilon}:=T^{\varepsilon}(\phi)$ to be the solution of the regularized problem
\begin{equation}\label{eq:state35}
\left \{
\begin{array}{ll}
\Delta u^{\varepsilon}= (f_{+}-  \phi) \chi^{\varepsilon}  (u^{\varepsilon}) - (f_{-}+  \phi) \chi^{\varepsilon}(-u^{\varepsilon})  &  \text {in } \ \Omega, \\
u^{\varepsilon}=g   &  \text{on}  \  \partial \Omega.
\end{array}
\right.
\end{equation}
Here  $\chi^{\varepsilon}(t) $ is a non-decreasing, smooth approximation of  $ \chi_{\{u>0\}} $ such that
\[
  \chi^{\varepsilon}(v)=
\left \{
\begin{array}{ll}
1 & v\geq\varepsilon, \\
0 & v\leq-\varepsilon. \\
\end{array}
\right.
\]
Without loss of generality,  we might choose
 \[
  \left \{
\begin{array}{ll}
 \chi^{\varepsilon}(u^{\varepsilon}) + \chi^{\varepsilon}(-u^{\varepsilon})=1, &\\
 \chi^{\varepsilon}  (0)=\frac{1}{2}.&
 \end{array}
\right.
\]
  Then  regularized state equation  (\ref{eq:state35}) becomes as
\begin{equation}\label{eq:state135}
\left \{
\begin{array}{ll}
\Delta u^{\varepsilon}= f_{+} \,  \chi^{\varepsilon}(u^{\varepsilon}) -  f_{-}\,  \chi^{\varepsilon}(-u^{\varepsilon}) -\phi  &  \text {in } \ \Omega, \\
u^{\varepsilon}=g   &  \text{on}  \  \partial \Omega.
\end{array}
\right.
\end{equation}
For simplicity, we rewrite     (\ref{eq:state35}) as
\begin{equation}\label{eq:state235}
\left \{
\begin{array}{ll}
\Delta u^{\varepsilon}=  \beta (u^{\varepsilon}) -\phi  &  \text {in } \ \Omega, \\
u^{\varepsilon}=g   &  \text{on}  \  \partial \Omega,
\end{array}
\right.
\end{equation}
where $ \beta (u^{\varepsilon})=  f_{+} \,  \chi^{\varepsilon}(u^{\varepsilon}) -  f_{-} \chi^{\varepsilon}(-u^{\varepsilon}).$ Also  let  $\Phi^{\varepsilon}(t)$ be the  regularization of    $\text {max}(v,0)$   defined  by
\[
 \Phi^{\varepsilon}(t)= \int_{-\infty}^{t} \chi^{\varepsilon}(s) \, ds.
 \]

\subsection{Properties of $T^{\varepsilon}$}

Next,  we show that problem (\ref{eq:state35}) has a unique solution.
\begin{theorem}\label{uniqu1}
For any $ \phi \in U_{ad}$ there exists a unique solution $ u^{\varepsilon} = T^\varepsilon(\phi)$ of (\ref{eq:state35}). Furthermore
   $u^{\varepsilon}\in W^{2,p}(\Omega)$ with $1\le p< \infty.$  Moreover, as $\varepsilon\to 0$,
\[
  u^{\varepsilon}\rightarrow u  \quad  \text{ strongly in } \quad  H^{1}(\Omega),
\]
where $u$ is the unique solution of \eqref{eq:state}.
\end{theorem}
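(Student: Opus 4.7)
\textbf{Existence, uniqueness, and $W^{2,p}$ regularity.} The plan is to recognize $u^{\varepsilon}$ as the unique minimizer over $g + H^{1}_{0}(\Omega)$ of the regularized energy
\[
E^{\varepsilon}(v) = \int_{\Omega}\left(\tfrac{1}{2}|\nabla v|^{2} + f_{+}\,\Phi^{\varepsilon}(v) + f_{-}\,\Phi^{\varepsilon}(-v) - \phi\, v\right) dx.
\]
Because $\chi^{\varepsilon}$ is non-decreasing, the primitives $\Phi^{\varepsilon}(v)$ and $\Phi^{\varepsilon}(-v)$ are convex in $v$, so $E^{\varepsilon}$ is strictly convex (via the Dirichlet term), coercive on $g + H^{1}_{0}(\Omega)$ by Poincar\'e, and weakly lower semicontinuous. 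Existence and uniqueness of a minimizer are then standard, and a first-variation argument shows that this minimizer satisfies exactly \eqref{eq:state135}. The $W^{2,p}$ regularity follows because $\beta(u^{\varepsilon}) - \phi$ is uniformly bounded in $L^{\infty}$ by $\|f_{+}\|_{\infty} + \|f_{-}\|_{\infty} + \|\phi\|_{\infty}$, so Calder\'on-Zygmund estimates close the loop.

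\textbf{Energy comparison as $\varepsilon\to 0$.} For the convergence I would exploit the same variational structure and compare $E^{\varepsilon}$ to the limit energy
\[
E(v) = \int_{\Omega}\left(\tfrac{1}{2}|\nabla v|^{2} + f_{+}\, v^{+} + f_{-}\, v^{-} - \phi\, v\right) dx,
\]
whose unique minimizer over $g + H^{1}_{0}(\Omega)$ is precisely the solution $u$ of \eqref{eq:state} (rewriting $\phi v = \phi v^{+} - \phi v^{-}$ shows this is the two-phase energy of Section~\ref{TOPM} with non-negative coefficients $f_{+} - \phi$ and $f_{-} + \phi$). Using the normalization $\chi^{\varepsilon}(s) + \chi^{\varepsilon}(-s) = 1$, a direct computation gives $|\Phi^{\varepsilon}(v) - v^{+}| \le \varepsilon$ for all $v\in\mathbb{R}$, whence $|E^{\varepsilon}(v) - E(v)| \le C\varepsilon$ uniformly in $v$. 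Chaining this with the minimality of $u^{\varepsilon}$ for $E^{\varepsilon}$ and of $u$ for $E$,
\[
E(u) \le E(u^{\varepsilon}) \le E^{\varepsilon}(u^{\varepsilon}) + C\varepsilon \le E^{\varepsilon}(u) + C\varepsilon \le E(u) + 2C\varepsilon,
\]
so that $0 \le E(u^{\varepsilon}) - E(u) \le 2C\varepsilon$.

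\textbf{Upgrading to strong $H^{1}$ convergence.} Writing $E = \tfrac{1}{2}\|\nabla \cdot\|_{L^{2}}^{2} + F$ with $F$ convex, the parallelogram law applied to the quadratic part yields
\[
E(u^{\varepsilon}) + E(u) - 2E\!\left(\tfrac{u^{\varepsilon} + u}{2}\right) \ge \tfrac{1}{4}\|\nabla(u^{\varepsilon} - u)\|_{L^{2}}^{2}.
\]
Since $\tfrac{1}{2}(u^{\varepsilon} + u) \in g + H^{1}_{0}(\Omega)$ and $u$ minimizes $E$, the left-hand side is at most $E(u^{\varepsilon}) - E(u) \le 2C\varepsilon$, so $\|\nabla(u^{\varepsilon} - u)\|_{L^{2}} = O(\sqrt{\varepsilon})$. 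Poincar\'e (valid since $u^{\varepsilon} - u \in H^{1}_{0}(\Omega)$) then promotes this to strong convergence in $H^{1}(\Omega)$.

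The conceptual point I expect to be the crux is recognizing that rewriting the regularized problem variationally exposes a clean $\Gamma$-convergence at rate $O(\varepsilon)$ to the original two-phase energy; this sidesteps any direct analysis of the discontinuous nonlinearity on the level set $\{u = 0\}$ (where a direct PDE approach would need Stampacchia plus weak-$\ast$ arguments on the coincidence set) and, as a bonus, delivers strong $H^{1}$ convergence with quantitative rate $O(\sqrt{\varepsilon})$.
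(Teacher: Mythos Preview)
Your argument is correct. The existence/regularity portion essentially coincides with the paper's: both obtain $u^{\varepsilon}$ as a minimizer of the regularized energy and invoke Calder\'on--Zygmund for $W^{2,p}$. For uniqueness you appeal to strict convexity of $E^{\varepsilon}$, whereas the paper argues by the comparison principle (using monotonicity of $\chi^{\varepsilon}$ on the set $\{u_1^{\varepsilon}\ge u_2^{\varepsilon}\}$); both are valid and short.

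The genuine divergence is in the convergence step. The paper proceeds by compactness: a uniform $W^{1,2}$ bound, extraction of a weakly convergent subsequence, lower semicontinuity of the Dirichlet energy, and the chain $I(u)\le \liminf I^{\varepsilon}(u^{\varepsilon})\le \liminf I^{\varepsilon}(v)=I(v)$ to identify the limit as the unique minimizer of $I$; strong $H^{1}$ convergence is then implicit from convergence of the energies. Your route instead exploits the pointwise estimate $|\Phi^{\varepsilon}(t)-t^{+}|\le\varepsilon$ (which indeed follows from $\chi^{\varepsilon}(s)+\chi^{\varepsilon}(-s)=1$) to get the uniform bound $|E^{\varepsilon}(v)-E(v)|\le C\varepsilon$, and then uses the $\tfrac14$-strong-convexity of the Dirichlet part (parallelogram identity plus convexity of the lower-order terms, valid since $f_{\pm}\ge 0$ and $\phi\in U_{ad}$) to turn the energy gap $E(u^{\varepsilon})-E(u)\le 2C\varepsilon$ directly into $\|\nabla(u^{\varepsilon}-u)\|_{L^{2}}=O(\sqrt{\varepsilon})$. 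Your approach is cleaner, avoids any subsequence extraction, and delivers a quantitative rate the paper does not obtain; the paper's approach, on the other hand, is more robust in that it would still go through for regularizations not satisfying a uniform $O(\varepsilon)$ bound on $|\Phi^{\varepsilon}-(\cdot)^{+}|$, relying only on pointwise convergence and uniqueness of the limiting minimizer.
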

\begin{proof}
Any solution $ u^{\varepsilon} $ of \eqref{eq:state35} can be considered a minimizer of  the functional
\[
I^{\varepsilon}(v)=\int_{\Omega}(\frac{1}{2}|\nabla v|^2+(f_{+}-  \phi)\Phi^{\varepsilon}(v)-(f_{-}+ \phi) \Phi^{\varepsilon}(-v))\,dx,
\]
which proves the existence by the direct method.   The uniqueness of the solution is a matter of using the comparison principle. Let $u_1^\varepsilon$ and $u_2^\varepsilon$ be two solutions of \eqref{eq:state35}. Let $ D=\{x\in \Omega : u_{1}^{\varepsilon}(x) \geq u_{2}^{\varepsilon}(x)\}$,
  by the monotonicity of $\chi^{\varepsilon}(u)$, we have $\Delta u_1^\varepsilon\geq\Delta u_2^\varepsilon$ in  $D$  as well as $u_1^\varepsilon=u_2^\varepsilon$ on $\partial  D$.
  Thus by the comparison principle $u_1^\varepsilon = u_2^\varepsilon$ in  $D$. Similarly, considering the set $\{u_1^\varepsilon\leq u_2^\varepsilon\}$ shows that $u_1^\varepsilon= u_2^\varepsilon$ in $\Omega$.

Since $u^{\varepsilon} $ is the weak solution  of \eqref{eq:state35} then by Calderon-Zygmund estimate (see, e.g., \cite{A7}), we infer that for any $K\subset\subset\Omega$
 \[
 \| u^{\varepsilon} \|_{W^{2,p}( K)}\leq C_{p,\Omega}\left(\| g\|_{L^{\infty}( \Omega )} +\|f_{+}-   \phi\|_{L^{\infty}}+\| f_{-}+  \phi\|_{L^{\infty}} \right).
 \]
 Thus the family  $\{u^{\varepsilon}\}$ is uniformly bounded in $ W^{2,p}(K)$   and also in $C^{1,\alpha}_{\text{loc}} (\Omega)$ for $p >  n$, by Sobolev embedding,   which indicates that $u^{\varepsilon}$ is the strong solution of 
 \eqref{eq:state35}.  Therefore  there exists a subsequence  $  \epsilon_{j} $   such that
 \begin{equation*}
 u ^{\epsilon_{j}} \rightarrow  u  \ \ \ \text{    in}  \    \  C^{1,\alpha}_{\textrm{loc}}(\Omega).
  \end{equation*}
Next, we multiply  the equation
 \begin{equation}\label{Jorge11}
 \Delta u^{\varepsilon}= (f_{+}- \phi)\chi^{\varepsilon}(u^{\varepsilon})   - (f_{-}+ \phi)\, \chi^{\varepsilon} (-u^{\varepsilon}),
\end{equation}
  by $(u^{\varepsilon}  -g)$  and integrate over $\Omega$  to obtain
   \begin{equation*}
 \int_{\Omega}   \nabla u^{\varepsilon}   \cdot  \nabla (u^{\varepsilon}-g) \, dx = -\int_{\Omega}(f_{+}- \phi  ) (u^{\varepsilon}-g) \, \chi^{\varepsilon}(u^{\varepsilon})\, dx    - \int_{\Omega} (f_{-}+ \phi ) (u^{\varepsilon}-g) \, \chi^{\varepsilon}(u^{\varepsilon}) \, dx,
\end{equation*}
  from here adding proper terms gives
\begin{equation}\label{xx}
\begin{split}
 \int_{\Omega}   | \nabla (u^{\varepsilon}-g) |^{2}  \, dx  &=  \int_{\Omega}   \nabla g  \cdot  \nabla (g- u^{\varepsilon}) \, dx\\
   &-\int_{\Omega}(f_{+}- \phi  ) (u^{\varepsilon}-g) \chi^{\varepsilon}(u^{\varepsilon}) \, dx    - \int_{\Omega} (f_{-}+ \phi ) (u^{\varepsilon}-g) \, \chi^{\varepsilon}(-u^{\varepsilon}) \, dx.
 \end{split}
\end{equation}
This implies
   \begin{equation*}
      \| \nabla (u^{\varepsilon}-g)\|^{2}_{L^2}  \le  \frac{1}{2}(\|\nabla g\|^{2}_{L^2}+  \|\nabla (g- u^{\varepsilon})\|^{2}_{L^2}+\|f_{+}- \phi  \|^{2}_{L^2}+ \|u^{\varepsilon}-g\|^{2}_{L^2}+
       \|f_{-}+ \phi  \|^{2}_{L^2}).
\end{equation*}
By rearranging terms and  using  Poincar\'{e}
\[
   \|  u^{\varepsilon} \|_{W^{1,2}(\Omega)} \le C,  \quad \textrm{uniformly  in}  \, \varepsilon.
   \]
Hence,   over the same  subsequence $\varepsilon=\varepsilon_{j}$  
\[
u^{\varepsilon} \rightharpoonup  u  \quad \text{in} \,    W^{1,2}(\Omega),
\]
\[
u^{\varepsilon}  \rightarrow   u \quad \text{in}  \,  L^{2}(\Omega),
\]
with $u=g$ on $\partial \Omega.$
By  lower semi-continuity it holds
 \[
 \int_{\Omega} |\nabla u|^{2} \, dx  \le \underset{\varepsilon \rightarrow 0}{\text{Liminf}}  \int_{\Omega} |\nabla  u^{\varepsilon}|^{2} \, dx
 \]
 and
\begin{equation}\label{xx}
\begin{split}
  &\int_{\Omega}(f_{+}-  \phi) \,   \text{max}(u^{\varepsilon} ,0)- (f_{-}+  \phi) \, \text {min}(u^{\varepsilon} ,0)\, dx     \rightarrow\\
  & \int_{\Omega}(f_{+}-  \phi) \,  \text{max}(\tilde{u},0)- (f_{-}+  \phi) \,  \text {min}(\tilde{u},0) \,dx.
  \end{split}
\end{equation}
 Now let
  \begin{equation}\label{functional}
I(v)=\int_{\Omega}\left(\frac{1}{2}|\nabla v|^2+ (f_{+} - \phi)     \text {max}(u,0)  + (f_{-} + \phi)     \text {min}(u,0)  \right)dx.
\end{equation}
 Then the following holds
\[
I(u)\le \underset{\varepsilon \rightarrow 0}{\text{Liminf}}\,  I^{\varepsilon}(u^\varepsilon)\le   \underset{\varepsilon \rightarrow 0} {\text{Liminf}} \, I^{\varepsilon}(v)=I(v),
\]
for  all admissible $v,$ which shows that $u$ is the minimizer. Since $u$ is unique then the whole sequence converges to $u$.  It is worth to note that  $u \in C^{1,\alpha}_{\textrm{loc}}(\Omega)$  so it  satisfies  the following  equations almost everywhere
\[
 \Delta u  = f_{+} -\phi \text {  in }    {\{u> 0}\},
 \]
 \[
 \Delta u = f_{-}+\phi  \text { in }   {\{u < 0}\},
 \]
 \[
   \Delta u =0  \ \  \text { a.e. in }  \  {\{u=0}\}.
 \]
\end{proof}

\begin{lemma}\label{uniqu3}
Consider the map   solution  $T^{\varepsilon}$ the solution of approximated equation  (\ref{eq:state35}). Then
   $ T^{\varepsilon}: {\phi \mapsto u^{\varepsilon}}$  is Lipschitz operator.
\end{lemma}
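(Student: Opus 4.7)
The plan is to mirror the argument of Theorem~\ref{nus11}, with the welcome simplification that $\chi^{\varepsilon}$ is smooth and monotone rather than a jump indicator. Take $\phi_1,\phi_2\in U_{ad}$ with corresponding states $u_i^{\varepsilon}=T^{\varepsilon}(\phi_i)$, and set $w:=u_2^{\varepsilon}-u_1^{\varepsilon}$, which lies in $H_0^1(\Omega)$ because both solutions share the boundary datum $g$. Subtracting the two equations~(\ref{eq:state35}) and exploiting the normalization $\chi^{\varepsilon}(v)+\chi^{\varepsilon}(-v)\equiv 1$, the control contributions collapse into a single term and one obtains
$$
\Delta w = f_{+}\bigl[\chi^{\varepsilon}(u_2^{\varepsilon})-\chi^{\varepsilon}(u_1^{\varepsilon})\bigr]
-f_{-}\bigl[\chi^{\varepsilon}(-u_2^{\varepsilon})-\chi^{\varepsilon}(-u_1^{\varepsilon})\bigr]+(\phi_1-\phi_2).
$$

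Next I would test with $w$ and integrate by parts. Since $\chi^{\varepsilon}$ is non-decreasing and $f_\pm\geq 0$, the monotone quadratic terms have favourable sign: $\bigl[\chi^{\varepsilon}(u_2^{\varepsilon})-\chi^{\varepsilon}(u_1^{\varepsilon})\bigr]w\geq 0$ and $-\bigl[\chi^{\varepsilon}(-u_2^{\varepsilon})-\chi^{\varepsilon}(-u_1^{\varepsilon})\bigr]w\geq 0$, so they may simply be discarded. This reduces the identity to
$$
\int_{\Omega}|\nabla w|^{2}\,dx \;\leq\; \int_{\Omega}(\phi_2-\phi_1)\,w\,dx
\;\leq\;\|\phi_2-\phi_1\|_{L^2(\Omega)}\,\|w\|_{L^2(\Omega)}
$$
by Cauchy--Schwarz. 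Poincar\'e's inequality on $H_0^1(\Omega)$ then upgrades the right-hand side to $C\|\phi_2-\phi_1\|_{L^2}\,\|\nabla w\|_{L^2}$, and dividing out yields
$$
\|\nabla(u_2^{\varepsilon}-u_1^{\varepsilon})\|_{L^2(\Omega)}\;\leq\;C\,\|\phi_2-\phi_1\|_{L^2(\Omega)},
$$
i.e.\ Lipschitz continuity of $T^{\varepsilon}$ from $U_{ad}\subset L^2(\Omega)$ into $H^1(\Omega)$.

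The calculation is essentially routine, since the sign-analysis that was delicate in Theorem~\ref{nus11}---where one had to insert and subtract terms to control the jump indicators---is here replaced by the direct monotonicity of $\chi^{\varepsilon}$. Consequently no genuine obstacle arises. The only point worth emphasizing is that the Poincar\'e constant and the upper bounds on $f_\pm$ do not depend on $\varepsilon$, so the Lipschitz constant $C$ is $\varepsilon$-independent; this uniformity is what makes the estimate useful for the forthcoming passage $\varepsilon\to 0$ to the non-smooth problem.
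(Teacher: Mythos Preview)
Your argument is correct and follows essentially the same route as the paper: subtract the two regularized equations, test with the difference, discard the $f_\pm$ terms by monotonicity of $\chi^{\varepsilon}$, use the normalization $\chi^{\varepsilon}(v)+\chi^{\varepsilon}(-v)=1$ to collapse the control terms, and conclude via Cauchy--Schwarz and Poincar\'e. The only cosmetic difference is that you invoke the normalization before subtracting (so the control enters as a clean $\phi_1-\phi_2$), whereas the paper subtracts first and simplifies afterward; your added remark on the $\varepsilon$-independence of the Lipschitz constant is correct and worth keeping.
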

\begin{proof}
Let  $ u^{\varepsilon}=T^{\varepsilon}(\phi), \ v^{\varepsilon}=T^{\varepsilon}(\varphi)$. Then
\begin{equation}\label{F1}
\begin{split}
  \Delta(u^{\varepsilon}-v^{\varepsilon})=  & f_{+}(\chi^{\varepsilon}(u^{\varepsilon})-\chi^{\varepsilon}(v^{\varepsilon}))+
 f_{-}( \chi^{\varepsilon}(-v^{\varepsilon})-\chi^{\varepsilon}(-u^{\varepsilon}))-\\
 &-  \phi( \chi^{\varepsilon}(u^{\varepsilon})+\chi^{\varepsilon}(-u^{\varepsilon}))+  \varphi( \chi^{\varepsilon}(v^{\varepsilon})+\chi^{\varepsilon}(v^{\varepsilon})).
 \end{split}
\end{equation}
 Multiplying  by $(u^{\varepsilon}-v^{\varepsilon})$  and integrating   by parts  gives
\[
  \int_{\Omega} |\nabla (u^{\varepsilon}-v^{\varepsilon})|^{2} \, dx =
  \]
  \[
  \int_{\Omega}
(f_{+}[ \chi^{\varepsilon}(u)-\chi^{\varepsilon}(v)]+
 f_{-}[ \chi^{\varepsilon}(-v)-\chi^{\varepsilon}(-u)])\, (u^{\varepsilon}-v^{\varepsilon})\, dx +
 \]
 \[
  \int_{\Omega}(- \phi [ \chi^{\varepsilon}(u^{\varepsilon})+\chi^{\varepsilon}(-u^{\varepsilon})]+  \varphi[ \chi^{\varepsilon}(v^{\varepsilon})+\chi^{\varepsilon}(-v^{\varepsilon})] )(u^{\varepsilon}-v^{\varepsilon}) \, dx.
  \]
Note that $ (u^{\varepsilon}-v^{\varepsilon}) $ is vanishing on $ \partial \Omega.$ Also note that  $\chi^{\varepsilon}(u)$ is non decreasing then the followings hold
\[
( \chi^{\varepsilon}(u^{\varepsilon})-\chi^{\varepsilon}(v^{\varepsilon}))(u^{\varepsilon}-v^{\varepsilon}) \geq 0,
\]
and
\[
( \chi^{\varepsilon}(-v^{\varepsilon})-\chi^{\varepsilon}(-u^{\varepsilon}))(u^{\varepsilon}-v^{\varepsilon}) \geq 0.
\]
This implies
\begin{align*}
   \int_{\Omega} |\nabla (u^{\varepsilon}-v^{\varepsilon})|^{2} \, dx  \leq
   \int_{\Omega} | u^{\varepsilon}-v^{\varepsilon}|\,   |\phi-\varphi|\,  dx.
 \end{align*}
In the above, we used the facts  that
\[
 \chi^{\varepsilon}(u^{\varepsilon})+\chi^{\varepsilon}(-u^{\varepsilon}) = 1,
\]
\[
 \chi^{\varepsilon}(v^{\varepsilon})+\chi^{\varepsilon}(-v^{\varepsilon}) = 1.
 \]
Next using  Young's inequality and  Poincar\'{e} inequality  yields
\[
 \|u^{\varepsilon}-v^{\varepsilon}\|_{H^{1}_{0}}\leq  C\,  \| \psi-\varphi\|_{L^{2}}.
 \]
  \end{proof}

 $ T^{\varepsilon}$  is a good approximation of  $T $ in the  following sense.
 \begin{lemma}\label{uniqu4}
  Let $ \phi_{k}\rightarrow  \phi$  strongly  in $L^{2}(\Omega)$  then
    $u_{k}^{\varepsilon} \rightarrow u  $  strongly   in  $ H^{1}_{0}(\Omega),$  as $ \varepsilon$ tends to zero and $k$  tends to infinity,  where
    \begin{equation*}
      u_{k}^{\varepsilon}=T^{\varepsilon}(\phi_{k}).
      \end{equation*}
  \end{lemma}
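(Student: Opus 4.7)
The plan is to combine the uniform Lipschitz continuity of $T^{\varepsilon}$ (Lemma \ref{uniqu3}) with the strong $H^{1}$ convergence $T^{\varepsilon}(\phi)\to T(\phi)$ of Theorem \ref{uniqu1} via a triangle inequality argument. Specifically, I would introduce the intermediate quantity $u^{\varepsilon}:=T^{\varepsilon}(\phi)$ corresponding to the limit control $\phi$, and write
\[
\|u_{k}^{\varepsilon}-u\|_{H^{1}_{0}(\Omega)}\le \|T^{\varepsilon}(\phi_{k})-T^{\varepsilon}(\phi)\|_{H^{1}_{0}(\Omega)}+\|T^{\varepsilon}(\phi)-u\|_{H^{1}_{0}(\Omega)},
\]
where $u=T(\phi)$.

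The second term tends to $0$ as $\varepsilon\to 0$ directly from Theorem \ref{uniqu1}, so for any $\eta>0$ one can fix $\varepsilon_{0}>0$ such that $\|T^{\varepsilon}(\phi)-u\|_{H^{1}_{0}}<\eta/2$ whenever $\varepsilon\le\varepsilon_{0}$. For the first term, the key point is that the Lipschitz constant produced in Lemma \ref{uniqu3} is \emph{independent of} $\varepsilon$: its proof bounds $\int_{\Omega}|\nabla(u^{\varepsilon}-v^{\varepsilon})|^{2}$ by $\int_{\Omega}|u^{\varepsilon}-v^{\varepsilon}||\phi-\varphi|$ using only the monotonicity of $\chi^{\varepsilon}$ and the identity $\chi^{\varepsilon}(t)+\chi^{\varepsilon}(-t)=1$, and then applies Young and Poincar\'e with constants depending only on $\Omega$. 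I would flag this explicitly before quoting the estimate, since it is what drives the whole argument.

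Having recorded the $\varepsilon$-independent Lipschitz constant $C$, I conclude
\[
\|T^{\varepsilon}(\phi_{k})-T^{\varepsilon}(\phi)\|_{H^{1}_{0}(\Omega)}\le C\,\|\phi_{k}-\phi\|_{L^{2}(\Omega)},
\]
and by the strong $L^{2}$-convergence of the controls I can choose $K$ such that $C\|\phi_{k}-\phi\|_{L^{2}}<\eta/2$ for all $k\ge K$. Adding the two estimates yields $\|u_{k}^{\varepsilon}-u\|_{H^{1}_{0}}<\eta$ for every $k\ge K$ and $\varepsilon\le\varepsilon_{0}$, which is precisely the joint convergence claimed.

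The main obstacle, such as it is, lies in ensuring that the Lipschitz constant in Lemma \ref{uniqu3} does not degenerate as $\varepsilon\to 0$; a quick inspection of that proof shows it does not, so no additional work is needed. If that independence had failed, one would instead have to revisit the weak formulation, extract a subsequence converging to some $\tilde u$ using the Calder\'on--Zygmund estimates from Theorem \ref{uniqu1} (which are uniform in $\varepsilon$ and $k$ because $\phi_{k}$ is bounded in $L^{\infty}$), pass to the limit in the regularized equation, and invoke uniqueness to identify $\tilde u=T(\phi)=u$.
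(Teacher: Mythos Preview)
Your proposal is correct and follows exactly the paper's own argument: the paper also splits $\|u_{k}^{\varepsilon}-u\|_{H^{1}_{0}}$ via the triangle inequality into $\|T^{\varepsilon}(\phi_{k})-T^{\varepsilon}(\phi)\|_{H^{1}_{0}}+\|T^{\varepsilon}(\phi)-T(\phi)\|_{H^{1}_{0}}$ and handles the two pieces with Lemma~\ref{uniqu3} and Theorem~\ref{uniqu1} respectively. Your explicit remark that the Lipschitz constant in Lemma~\ref{uniqu3} is independent of $\varepsilon$ is a genuine improvement in clarity over the paper, which uses this fact tacitly.
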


\begin {proof}
\begin{equation}\label{F3}
\begin{split}
  & \| u^{\varepsilon}_{k} - u \|_{H^{1}_{0}} =\| T^{\varepsilon}(\phi_{k}) -T^{\varepsilon}(\phi) +T^{\varepsilon}(\phi) -T(\phi)\|_{H^{1}_{0}}\leq\\
  & \|T^{\varepsilon}(\phi_{k})-T^{\varepsilon}(\phi)\|_{H^{1}_{0}} +\|T^{\varepsilon}(\phi)-T(\phi)\|_{H^{1}_{0}}.
  \end{split}
\end{equation}
  Lemma (\ref{uniqu1}) implies that
\[
  \underset{\varepsilon \rightarrow 0}{\lim} \|T^{\varepsilon}(\phi)-T(\phi)\|=0,
  \]
  and by  Lemma (\ref{uniqu3}) we have
  \[
     \underset{ k \rightarrow  \infty}{\lim} \|T^{\varepsilon}(\phi_{k})-T^{\varepsilon}(\phi)\|=0.
\]
\end{proof}

In the next lemma, we will prove that the mapping  $T^{\varepsilon}$ is Fer\'chet-differentiable.
\begin{lemma}\label{Sunshine1}
The mapping  $ T^{\varepsilon}: U_{ad}\mapsto W^{2,2}(\Omega)$  is of class  $ C^{1}.$  Furthermore,   if $\phi, \psi  \in U_{ad}$,  and  $ \xi^{\varepsilon}=DT^{\varepsilon}(\phi) \cdot \psi$,   then  $ \xi^{\varepsilon}$ is the unique solution of the following Dirichlet problem
\begin{equation}\label{eq:state4}
\left \{
\begin{array}{ll}
  \Delta \xi^{\varepsilon} =  - \psi + \left[f_{+} \, (\chi^{\varepsilon} )^{'}(u) + f_{-} \,(\chi^{\varepsilon})^{'} (-u)\right]\xi^{\varepsilon}&  \textrm{in} \ \  \Omega, \\
  \xi^{\varepsilon}=0   &  \text{on} \ \  \partial \Omega.
\end{array}
\right.
\end{equation}
\end{lemma}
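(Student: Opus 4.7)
The plan is to apply the implicit function theorem to the nonlinear operator
$$F \colon L^2(\Omega)\times X \to L^2(\Omega), \qquad F(\phi,u) := \Delta u - f_+\,\chi^\varepsilon(u) + f_-\,\chi^\varepsilon(-u) + \phi,$$
where $X := \{v\in W^{2,2}(\Omega) : v-g\in W^{1,2}_0(\Omega)\}$. By the normalization $\chi^\varepsilon(u)+\chi^\varepsilon(-u)=1$, equation \eqref{eq:state135} reads $F(\phi,T^\varepsilon(\phi))=0$, so the implicit function theorem will identify $T^\varepsilon$ with the implicitly defined solution branch.

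First, I check that $F$ is continuously Fréchet differentiable. The linear contributions $u\mapsto\Delta u$ and $\phi\mapsto\phi$ are trivially $C^1$. For the composition parts $u\mapsto f_\pm\chi^\varepsilon(\pm u)$ I combine two facts: (i) Theorem \ref{uniqu1} provides a uniform $L^\infty$ bound on solutions of \eqref{eq:state35} via the embedding $W^{2,p}\hookrightarrow C^{0,\alpha}$ with $p>n$, and (ii) $\chi^\varepsilon\in C^\infty(\mathbb{R})$ has bounded derivatives of all orders. These together ensure that the Nemytskii map $u\mapsto \chi^\varepsilon(u)$ is $C^1$ from a neighborhood of $T^\varepsilon(\phi)$ in $X$ into $L^2(\Omega)$, with derivative $\xi\mapsto (\chi^\varepsilon)'(u)\,\xi$. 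The partial derivatives of $F$ are thus
$$D_\phi F(\phi,u)\psi = \psi, \qquad D_u F(\phi,u)\xi = \Delta\xi - c_\varepsilon(x)\,\xi,$$
where $c_\varepsilon(x) := f_+(\chi^\varepsilon)'(u) + f_-(\chi^\varepsilon)'(-u) \in L^\infty(\Omega)$.

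Next, I establish that $D_uF(\phi,u)$ is an isomorphism from $X_0 := W^{2,2}(\Omega)\cap W^{1,2}_0(\Omega)$ onto $L^2(\Omega)$. Since $\chi^\varepsilon$ is nondecreasing and $f_\pm\ge 0$, we have $c_\varepsilon\ge 0$. The bilinear form
$$a(\xi,\eta) = \int_\Omega\bigl(\nabla\xi\cdot\nabla\eta + c_\varepsilon\,\xi\,\eta\bigr)\,dx$$
is therefore coercive and continuous on $W^{1,2}_0(\Omega)$, so Lax--Milgram yields a unique weak solution $\xi\in W^{1,2}_0(\Omega)$ of $-\Delta\xi+c_\varepsilon\xi=h$ for each $h\in L^2(\Omega)$, and Calderón--Zygmund regularity lifts $\xi$ to $W^{2,2}(\Omega)$.

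Applying the implicit function theorem I conclude that $T^\varepsilon$ is of class $C^1$ from a neighborhood of $\phi$ in $L^2(\Omega)$ into $X\subset W^{2,2}(\Omega)$, and that $\xi^\varepsilon := DT^\varepsilon(\phi)\psi$ is the unique element of $X_0$ satisfying $D_uF(\phi,u)\xi^\varepsilon = -\psi$, which rearranges exactly into \eqref{eq:state4} (the zero trace being built into $X_0$). Uniqueness of $\xi^\varepsilon$ follows directly from the isomorphism just established. The principal obstacle will be pinning down a clean functional-analytic framework in which the Nemytskii operators $u\mapsto\chi^\varepsilon(\pm u)$ are genuinely continuously Fréchet differentiable into $L^2$; the uniform $L^\infty$ bound furnished by Theorem \ref{uniqu1} is the key device, since it reduces matters to the elementary case of smooth nonlinearities composed with uniformly bounded arguments, where $C^1$ regularity follows from a standard Taylor-with-remainder plus dominated-convergence argument.
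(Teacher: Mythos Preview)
Your proposal is essentially the paper's own argument: both invoke the implicit function theorem for the map $F$ encoding \eqref{eq:state135}, differentiate $F(\phi,T^\varepsilon(\phi))=0$, and use the relation $\chi^\varepsilon(u)+\chi^\varepsilon(-u)=1$ to arrive at \eqref{eq:state4}.  The only cosmetic difference is that the paper writes $F$ with values in $L^2(\Omega)\times L^2(\partial\Omega)$, encoding the boundary condition as a second component, whereas you build the trace condition into the domain $X$; this is immaterial.  Your version is in fact more complete than the paper's, which simply asserts that $F$ is $C^1$ and never checks that $D_uF$ is an isomorphism; your Lax--Milgram/Calder\'on--Zygmund verification of this hypothesis, exploiting $c_\varepsilon\ge 0$, is exactly what the argument needs.
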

\begin{proof}
The proof follows from the implicit function theorem.  Consider the function
\[
F:D(\Delta) \times  L^\infty(\Omega)   \mapsto     L^{2}(\Omega)\times  L^{2}(\partial\Omega),
\]
defined by
\[
F(u,\phi)=\left( -\Delta u + (f_{+}-  \phi) \chi^{\varepsilon}  (u) - (f_{-}+  \phi) \chi^{\varepsilon}(-u), (u-g)_{\lfloor\partial\Omega} \right),
\]
where 
\[
D(\Delta)=\{u\in H^{1}(\Omega):  \Delta u \in L^{2}(\Omega)\}
\]
endowed with the norm $\| u\|_{L^{2}(\Omega)} + \|\Delta u\|_{L^{2}(\Omega)}$.  
 It is obvious that  $F$ is of class  $C^{1}$ and $F(T^\varepsilon(\phi),\phi)=(0,0)$ for every $ \phi \in U_{ad}$ by Lemma \ref{uniqu1}. Then $T^\varepsilon$ is $C^1$ and $\xi^{\varepsilon}=DT^{\varepsilon}(\phi)(\psi)$ satisfies
\[
\frac{\partial F}{\partial u}(u,\phi)(\xi^{\varepsilon})=\psi,
\]
which ends up with (\ref{eq:state4}). Note that we have used the relation
\[
\chi^{\varepsilon}(u)+\chi^{\varepsilon}(-u) = 1,
\]
in our calculation to derive \eqref{eq:state4}.
\end{proof}

In the rest of this section, we introduce a family of approximation problems for functional $J(\phi)$. Let  $ \varepsilon >0 $ and $\phi \in U_{ad} $ we introduce the functional
\begin{equation*}
 J^{\varepsilon}(\phi)= \frac{1}{2}\int_{\Omega}(u^{\varepsilon}(\phi)-z)^{2} \, dx+   \frac{\lambda}{2} \int_{\Omega}  |\phi|^{2}  \, dx.
   \end{equation*}
\textbf{Problem $(P^{\varepsilon})$}: Find $ \phi^{\varepsilon} \in U_{ad} $ such that\\
\begin{equation*}
J^{\varepsilon}(\phi^{\varepsilon})=\underset{s.t \,  u^{\varepsilon}=T^{\varepsilon}(\phi) }{\textrm{inf}} \ J^{\varepsilon}(\phi).
\end{equation*}
Similar to existence for problem $(P)$ we can prove the existence and uniqueness for problem $(P^{\varepsilon} ).$  Now we establish the following convergence result will be helpful in the next section.
\begin{theorem}\label{uniqu5}

For any   $\varepsilon > 0$, there is at least one pair of  solution $(\phi^{\varepsilon},  u^\varepsilon)$     for
the Problem ($P^{\varepsilon}$). Moreover, for  $ \varepsilon \rightarrow 0$, on a subsequence, we have
 $ (\phi^{\varepsilon}, u^{\varepsilon}) \rightarrow  (\phi^{*}, u^{*}) $ (an optimal pair for (P)) in the strong topology of   $L^{2}(\Omega) \times H^{1}(\Omega).$
\end{theorem}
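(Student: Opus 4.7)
The plan is to handle the statement in two parts: existence of an optimal pair for $(P^\varepsilon)$ at fixed $\varepsilon > 0$, and convergence of these approximate optima as $\varepsilon \to 0$. The existence part mimics the direct-method argument from Section \ref{farid}: given a minimizing sequence $\{\phi_k\} \subset U_{ad}$ for $J^\varepsilon$, I extract a weak $L^2$ limit $\phi^\varepsilon \in U_{ad}$ using convexity and closedness of $U_{ad}$, and use the uniform $W^{1,2}$ energy estimate derived in the proof of Theorem \ref{uniqu1} to obtain $u^\varepsilon_k = T^\varepsilon(\phi_k) \rightharpoonup u^\varepsilon$ in $W^{1,2}$ and $\to u^\varepsilon$ in $L^2$ and almost everywhere. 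Since $\chi^\varepsilon$ is bounded and continuous, dominated convergence allows passage to the limit in the weak form of \eqref{eq:state35}, identifying $u^\varepsilon = T^\varepsilon(\phi^\varepsilon)$; weak lower semi-continuity of $J^\varepsilon$ then yields optimality of $(\phi^\varepsilon, u^\varepsilon)$.

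For the limit $\varepsilon \to 0$, let $(\phi^\varepsilon, u^\varepsilon)$ be optimal for $(P^\varepsilon)$. The same energy estimate, uniform in $\varepsilon$, together with the boundedness of $U_{ad}$ in $L^2 \cap L^\infty$, produces (on a subsequence) $\phi^\varepsilon \rightharpoonup \phi^* \in U_{ad}$ weakly in $L^2$, $u^\varepsilon \rightharpoonup u^*$ in $H^1$, and $u^\varepsilon \to u^*$ strongly in $L^2$ and a.e. The main identification step is to show $u^* = T(\phi^*)$: test \eqref{eq:state135} against $\eta \in C_c^\infty(\Omega)$; the gradient term passes by weak $H^1$ convergence, the term $\int \phi^\varepsilon \eta$ by weak $L^2$ convergence, and $\int f_\pm \chi^\varepsilon(\pm u^\varepsilon) \eta$ by dominated convergence. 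Pointwise a.e., $\chi^\varepsilon(u^\varepsilon) \to 1$ on $\{u^* > 0\}$ and $\to 0$ on $\{u^* < 0\}$; the contribution from the coincidence set $\{u^* = 0\}$ is harmless because the uniform Calder\'on-Zygmund bound from the proof of Theorem \ref{uniqu1} transfers to $u^* \in W^{2,p}_{\mathrm{loc}}$, so by Stampacchia $\Delta u^* = 0$ a.e. on $\{u^* = 0\}$, matching the right-hand side there.

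To conclude optimality of $(\phi^*, u^*)$ for (P) and strong convergence, I fix any $\psi \in U_{ad}$ and use $J^\varepsilon(\phi^\varepsilon) \le J^\varepsilon(\psi)$. The right-hand side tends to $J(\psi)$ by the strong $H^1$ convergence $T^\varepsilon(\psi) \to T(\psi)$ from Theorem \ref{uniqu1}; the left-hand side satisfies $\liminf_{\varepsilon \to 0} J^\varepsilon(\phi^\varepsilon) \ge J(\phi^*)$ by strong $L^2$ convergence of the state and weak lower semi-continuity of $\|\cdot\|_{L^2}$. Hence $J(\phi^*) \le J(\psi)$ for every $\psi \in U_{ad}$, so $(\phi^*, u^*)$ is optimal. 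Choosing $\psi = \phi^*$ forces $\lim J^\varepsilon(\phi^\varepsilon) = J(\phi^*)$; combined with continuity of the fidelity term this gives $\|\phi^\varepsilon\|_{L^2} \to \|\phi^*\|_{L^2}$, and by uniform convexity of $L^2$ the weak convergence of $\phi^\varepsilon$ upgrades to strong convergence. Strong $H^1$ convergence of $u^\varepsilon$ then follows by subtracting the equations for $u^\varepsilon$ and $u^*$, testing with $u^\varepsilon - u^*$, and exploiting the now-strong convergences of $\phi^\varepsilon$ and $u^\varepsilon$ in the spirit of Theorem \ref{nus11}. The most delicate point, and the main obstacle I foresee, is controlling the $\chi^\varepsilon$ terms on the coincidence set $\{u^* = 0\}$, which is precisely where the non-differentiability of the original state map lives; the uniform $W^{2,p}$ estimate, together with the Stampacchia argument above, should resolve this.
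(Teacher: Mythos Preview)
Your proposal is correct and follows the same overall scheme as the paper: the direct method at fixed $\varepsilon$, then passage to the limit $\varepsilon\to 0$ via the optimality inequality $J^\varepsilon(\phi^\varepsilon)\le J^\varepsilon(\psi)$ combined with Theorem~\ref{uniqu1} to handle the right-hand side. The paper additionally exploits a uniform $H^2$ (Calder\'on--Zygmund) bound rather than just $H^1$, but this is a matter of convenience, not substance.

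Where your write-up genuinely goes beyond the paper is in the \emph{strong} convergence claim. The paper's proof stops after establishing weak subsequential limits and optimality of the limit pair, then appeals to ``uniqueness of the pair $(\phi^*,u^*)$'' to close; but uniqueness of minimizers of the nonconvex problem~(P) is never proved, and in any case uniqueness would only upgrade subsequential weak convergence to full-sequence weak convergence, not to strong convergence. Your argument---take $\psi=\phi^*$ to squeeze $\lim J^\varepsilon(\phi^\varepsilon)=J(\phi^*)$, deduce $\|\phi^\varepsilon\|_{L^2}\to\|\phi^*\|_{L^2}$ from convergence of the fidelity term, invoke the Radon--Riesz property of $L^2$, and then test the difference of the state equations with $u^\varepsilon-u^*$ (whose right-hand side is uniformly bounded and multiplied by something going to zero in $L^2$)---is the correct way to obtain the strong $L^2\times H^1$ convergence asserted in the statement. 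Your handling of the coincidence set in the identification $u^*=T(\phi^*)$ via Stampacchia is also sound, and again more explicit than what the paper records.
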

 \begin {proof}
Let   $\varepsilon > 0$ be fixed and $(\phi_n, u_n) $ be  a minimizing sequence to  ($P^{\varepsilon}$). By assumption $\phi_n$ is bounded in $L^2$ so the two-phase equation implies that $u_n$ is bounded in $H^{2}(\Omega)$. We denote by $ (\phi^{\varepsilon}, u^{\varepsilon})$  some weak limits, on a subsequence, in the  $L^{2}(\Omega) \times H^{2}(\Omega)$ topology.  The weak lower semi-continuity of the functional $J^{\varepsilon}$  shows that this is an optimal pair. For every $\varepsilon > 0$  the above boundedness of the pair
  $(\phi^{\varepsilon}, u^{\varepsilon})$ remains valid. Let $(\overline{\phi}, \overline{u})$ be the weak limit on a subsequence of  $(\phi^{\varepsilon}, u^{\varepsilon})$ in the weak topology   $L^{2}(\Omega) \times H^{2}(\Omega)$.  Next, let $\phi \in U_{ad}$ be arbitrary. 
  
  \[
 \frac{1}{2}\int_{\Omega}(\overline{u} -z)^{2} \, dx+   \frac{\lambda}{2} \int_{\Omega}  |\overline{\phi}|^{2}  \, dx \le \text{liminf} \,( \frac{1}{2}\int_{\Omega}(u^{\varepsilon}(\phi^\varepsilon) -z)^{2} \, dx+   \frac{\lambda}{2} \int_{\Omega}  |\phi^\varepsilon|^{2}  \, dx )\le 
 \]
 \[
\le \text{liminf} \, \frac{1}{2}\int_{\Omega}(u^{\varepsilon}(\phi) -z)^{2} \, dx+   \frac{\lambda}{2} \int_{\Omega}  |\phi|^{2}  \, dx=\frac{1}{2}\int_{\Omega}(u(\phi) -z)^{2} \, dx+   \frac{\lambda}{2} \int_{\Omega}  |\phi|^{2}  \, dx.
    \]
In the above inequality, we used Lemma (\ref{uniqu1}). The uniqueness of the pair $(\phi^*, u^*)$  shows the second statement of Lemma holds. 
\end{proof}
\begin{proposition}
The functional  $J^{\varepsilon}(\phi)$ is of class  $C^{1}$. Moreover,
\[
DJ^{\varepsilon}(\phi)\cdot\psi=\int_{\Omega}(p^{\varepsilon}+ \lambda  \phi ) \,  \psi \, dx \quad   \forall\phi,\psi \in  U_{ad},
\]
where $ p^{\varepsilon} \in W^{2,p}(\Omega)$ is  the unique solution of the following  problem:
\begin{equation}\label{eq:state05}
\left \{
\begin{array}{ll}
- \Delta  p^{\varepsilon}+ \beta^{'}(u^{\varepsilon})p^{\varepsilon}=u^{\varepsilon}-z & \textrm{in}\, \Omega \\
p^{\varepsilon}=0   &  \textrm{on} \,  \partial \Omega.\\
\end{array}
\right.
\end{equation}
\end{proposition}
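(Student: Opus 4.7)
The plan is to apply the chain rule. Since $J^{\varepsilon}(\phi)=\tfrac12\|T^{\varepsilon}(\phi)-z\|_{L^2}^2+\tfrac{\lambda}{2}\|\phi\|_{L^2}^2$ and the second term is manifestly $C^1$ with derivative $\lambda\int_{\Omega}\phi\,\psi\,dx$, I only have to differentiate the composition $\phi\mapsto T^{\varepsilon}(\phi)\mapsto \tfrac12\|T^{\varepsilon}(\phi)-z\|_{L^2}^2$. By Lemma \ref{Sunshine1}, $T^{\varepsilon}$ is of class $C^1$ from $U_{ad}$ into $W^{2,2}(\Omega)\hookrightarrow L^2(\Omega)$, and the quadratic tracking term is smooth on $L^2(\Omega)$; composing gives that $J^{\varepsilon}$ is $C^1$ with
\[
DJ^{\varepsilon}(\phi)\cdot\psi=\int_{\Omega}(u^{\varepsilon}-z)\,\xi^{\varepsilon}\,dx+\lambda\int_{\Omega}\phi\,\psi\,dx,
\]
where $\xi^{\varepsilon}=DT^{\varepsilon}(\phi)\cdot\psi$ solves the linearized equation \eqref{eq:state4}.

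Next I would establish that the adjoint problem \eqref{eq:state05} admits a unique solution $p^{\varepsilon}\in W^{2,p}(\Omega)\cap H^1_0(\Omega)$. The key observation is that the coefficient $\beta'(u^{\varepsilon})=f_+(\chi^{\varepsilon})'(u^{\varepsilon})+f_-(\chi^{\varepsilon})'(-u^{\varepsilon})$ is non-negative and bounded (since $\chi^{\varepsilon}$ is non-decreasing and smooth, and $f_{\pm}$ are bounded), so the operator $v\mapsto -\Delta v+\beta'(u^{\varepsilon})v$ with zero Dirichlet data is coercive on $H^1_0(\Omega)$. Lax--Milgram then gives a unique weak solution; since $u^{\varepsilon}-z\in L^p(\Omega)$ for every $p<\infty$, Calder\'on--Zygmund elliptic regularity (as applied in the proof of Theorem \ref{uniqu1}) upgrades $p^{\varepsilon}$ to $W^{2,p}(\Omega)$.

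To identify the derivative with the claimed expression, I would run the standard adjoint duality calculation. Multiplying \eqref{eq:state05} by $\xi^{\varepsilon}$ and integrating by parts (both $p^{\varepsilon}$ and $\xi^{\varepsilon}$ vanish on $\partial\Omega$) yields
\[
\int_{\Omega}\nabla p^{\varepsilon}\!\cdot\!\nabla\xi^{\varepsilon}\,dx+\int_{\Omega}\beta'(u^{\varepsilon})p^{\varepsilon}\xi^{\varepsilon}\,dx=\int_{\Omega}(u^{\varepsilon}-z)\xi^{\varepsilon}\,dx,
\]
while multiplying \eqref{eq:state4} by $p^{\varepsilon}$ and integrating by parts gives
\[
\int_{\Omega}\nabla p^{\varepsilon}\!\cdot\!\nabla\xi^{\varepsilon}\,dx+\int_{\Omega}\beta'(u^{\varepsilon})p^{\varepsilon}\xi^{\varepsilon}\,dx=\int_{\Omega}p^{\varepsilon}\psi\,dx.
\]
Subtracting, $\int_{\Omega}(u^{\varepsilon}-z)\xi^{\varepsilon}\,dx=\int_{\Omega}p^{\varepsilon}\psi\,dx$, and substituting into the chain-rule formula yields $DJ^{\varepsilon}(\phi)\cdot\psi=\int_{\Omega}(p^{\varepsilon}+\lambda\phi)\psi\,dx$ as desired.

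The main obstacle I anticipate is not any single step in isolation but rather keeping the functional framework consistent: ensuring that $T^{\varepsilon}$ is differentiable as a map into a space where the tracking term makes sense, and that the integration-by-parts identities above are legitimate. The former is covered by Lemma \ref{Sunshine1} once one notes the continuous embedding $W^{2,2}(\Omega)\hookrightarrow L^2(\Omega)$, and the latter is justified by the $W^{2,p}$ regularity of $p^{\varepsilon}$ and $\xi^{\varepsilon}$ together with their homogeneous boundary values, so in the end everything reduces to routine verifications once the adjoint problem is shown to be well-posed.
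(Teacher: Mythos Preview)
Your proof is correct and follows essentially the same approach as the paper: apply the chain rule via Lemma~\ref{Sunshine1} to get $DJ^{\varepsilon}(\phi)\cdot\psi=\int_{\Omega}(u^{\varepsilon}-z)\xi^{\varepsilon}\,dx+\lambda\int_{\Omega}\phi\,\psi\,dx$, then use the adjoint equation \eqref{eq:state05} together with the linearized state equation \eqref{eq:state4} and integration by parts to replace $\int_{\Omega}(u^{\varepsilon}-z)\xi^{\varepsilon}\,dx$ by $\int_{\Omega}p^{\varepsilon}\psi\,dx$. Your version is in fact a bit more careful than the paper's, since you explicitly address the well-posedness and $W^{2,p}$ regularity of the adjoint problem before invoking the duality identity.
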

\begin{proof}
 By Lemma (\ref{Sunshine1}) and chain rule
\[
DJ^{\varepsilon} (\phi)  \cdot \varphi=\int_{\Omega} (u^{\varepsilon}-z)\xi^{\varepsilon} +\,   \lambda  \phi \, \psi\,  dx,
\]
where $ \xi^{\varepsilon}=DT^{\varepsilon}(\phi)\cdot \psi$ is as in Lemma  (\ref{Sunshine1}). Using (\ref {eq:state05}) in this expression we get
\begin{equation}\label{FS}
\begin{split}
&  DJ^{\varepsilon}(\phi)\cdot \psi= \int_{\Omega}\left[-\Delta  p^{\varepsilon} +  \beta^{'}(u^{\varepsilon})p^{\varepsilon}\right]\, \xi^{\varepsilon}+
 \lambda \phi \,  \psi \,  dx \\
&  =\int_{\Omega}(-  \Delta  \xi^{\varepsilon}+ \beta^{'}(u^{\varepsilon})\,\xi ^{\varepsilon}) p^{\varepsilon}+   \phi \,  \psi \, dx \\
& =\int_{\Omega}(p^{\varepsilon} +  \lambda\,   \phi) \,  \psi \,  dx. \\
 \end{split}
\end{equation}
The next Proposition  states the characterization of optimal pairs for problem  $ (P^{\varepsilon}) $:\begin{proposition}\label{uniqu8}
 \ Let  $(\phi^{\varepsilon},u^{\varepsilon})$ be  an optimal pair for $ (P^{\varepsilon}).$   Then there exists an adjoint function  $ p^{\varepsilon} \in H^{1}_{0}(\Omega)$, such that the triple $(\phi^{\varepsilon}, u^{\varepsilon}, p^{\varepsilon})$ satisfies the following system
\[
\left \{
\begin{array}{llll}
- \Delta  u^{\varepsilon}+ \beta(u^{\varepsilon})=    \phi^{\varepsilon} & \textrm{in}\ \Omega \\
-  \Delta   p^{\varepsilon}+ \beta^{'}(u^{\varepsilon})p^{\varepsilon}=u^{\varepsilon}-z & \text{in}\ \Omega \\
\int_{\Omega}(p^{\varepsilon} +  \lambda  \phi^{\varepsilon}) \,  \psi\geq0 & \forall \psi \in  U_{ad}\\
p^{\varepsilon}= \phi^{\varepsilon}=0    &  \text{on} \ \ \  \partial \Omega.
\end{array}
\right.
\]
\end{proposition}
\end{proof}
 
\section{Conclusion}
In this paper, we studied optimal control in the setting of a two-phase membrane. The structure of the forward PDE equations presents challenges in establishing the known results for variational inequality.  

We established the existence and uniqueness of the optimal pairs by formulating the problem as a minimization problem with appropriate regularization of the state equation.
  We proved that the control-to-state map \( T \) is Lipschitz continuous, ensuring the stability of the solutions concerning changes in the control.

 While the results are a promising step toward understanding the two-phase membrane problem, there are important considerations for future research. Along with studying more complex systems, such as the multi-phase problem or systems with multiple controls, appropriately convergent numerical techniques present a number of both conceptual methodological as well as practical challenges that will necessitate its own dedicated comprehensive investigation.

%
%
%
%
%


\renewcommand{\refname}{REFERENCES }

\end{document}